\newtheorem{theorem}{Theorem}
\newtheorem{lemma}[theorem]{Lemma}
\newtheorem{proposition}{Proposition}
\theoremstyle{definition}
\newtheorem{definition}[theorem]{Definition}
\newtheorem{example}[theorem]{Example}
\title[Many particle approximation of the Aw-Rascle-Zhang model]
{Many particle approximation of the Aw-Rascle-Zhang second order model for vehicular traffic}
\author[M.\ Di Francesco, S.\ Fagioli and M.\ D.\ Rosini]{}
\subjclass{Primary: 35L65, 90B20.}
 \keywords{Aw-Rascle-Zhang model, second order models for vehicular traffics, many particle limit.}
 \email{marco.difrancesco@univaq.it}
 \email{simone.fagioli@univaq.it}
 \email{mrosini@umcs.lublin.pl}
\thanks{MDF is supported by the Italian MIUR-PRIN project $2012L5WXHJ\_003$. SF is partially supported by the Italian INdAM-GNAMPA 2015 mini-project: Analisi e stabilit\`a per modelli di equazioni alle derivate parziali nella matematica applicata. MDR is also partially supported by ICM Interdyscyplinarne Centrum Modelowania Matematycznego i Komputerowego, Uniwersytet Warszawski. The authors would like to thanks Giovanni Russo for comments and suggestions on the numerical part.}
\thanks{$^*$ Corresponding author: M.\ D.\ Rosini}
\begin{document}
\maketitle
\centerline{\scshape Marco Di Francesco and Simone Fagioli}
\medskip
{\footnotesize
\centerline{DISIM, Universit\`a degli Studi dell'Aquila,}
\centerline{via Vetoio 1 (Coppito), 67100 L’Aquila (AQ), Italy}
}

\medskip

\centerline{\scshape Massimiliano D.\ Rosini$^*$}
\medskip
{\footnotesize
\centerline{Instytut Matematyki, Uniwersytet Marii Curie-Sk\l odowskiej,}
\centerline{pl.\ Marii Curie-Sk\l odowskiej 1, 20-031 Lublin, Poland}
}

\begin{abstract}
We consider the follow-the-leader approximation of the Aw-Rascle-Zhang (ARZ) model for traffic flow in a multi population formulation. We prove rigorous convergence to weak solutions of the ARZ system in the many particle limit in presence of vacuum. The result is based on uniform ${\mathbf{BV}}$ estimates on the discrete particle velocity. We complement our result with numerical simulations of the particle method compared with some exact solutions to the Riemann problem of the ARZ system.
\end{abstract}

\section{Introduction}

The Aw, Rascle \cite{Aw2000916} and Zhang \cite{Zhang2002275} (ARZ) model is a second order system describing vehicular traffic.
In its continuum formulation, it can be written as the $2\times2$ system of conservation laws in one space dimension
\begin{equation}\label{eq:ARZ_intro}
\begin{cases}
\rho_t + \left(\rho \, v\right)_x = 0,&t> 0, ~ x\in {\mathbb{R}},
\\
\left[\rho \, (v+p(\rho))\right]_t + \left[\rho \, v \, (v+p(\rho))\right]_x = 0,&t> 0, ~ x\in {\mathbb{R}}.
\end{cases}
\end{equation}

The conserved variables $\rho$ and $[\rho \, (v+p(\rho))]$ describe respectively the \emph{density} and the \emph{generalized momentum} of the system. $v$ is the \emph{velocity}. The quantity
\[w \doteq v+p(\rho)\]
is called \emph{Lagrangian marker}. The function $p$ in \eqref{eq:ARZ_intro} is the \emph{pressure function} and accounts for drivers' reactions to the state of traffic in front of them.
While traffic flow is one of the main motivating applications behind the system \eqref{eq:ARZ_intro}, we see a growing interest nowadays on different contexts as crowd dynamics and bio-mathematics.

The instability near the vacuum state makes the mathematical theory for \eqref{eq:ARZ_intro} a challenging topic.
For this reason, as in \cite{BCM-order2,BCJMU-order2,godvik}, we study \eqref{eq:ARZ_intro} in the Riemann invariant coordinates $(v,w)$ and use the following expressions for the density
\begin{align*}
&\rho \doteq p^{-1}(w-v).
\end{align*}

It is well known since \cite{AKMR2002} and the earlier works \cite{gazis,prigogine} (see also a related result in \cite{moutari}) that the discrete Lagrangian counterpart of \eqref{eq:ARZ_intro} is provided by the second order \emph{follow-the-leader} system
\begin{equation}\label{eq:follow_the_leader_intro1}
  \begin{cases}
  \displaystyle{\dot{x}_i=V_i}\,, & \\
  \displaystyle{\dot{V}_i=p'\left(\frac{1}{x_{i+1}-x_i}\right) \frac{V_{i+1}-V_i}{(x_{i+1}-x_i)^2}}\,,&
  \end{cases}
\end{equation}
where $x_i(t)$ and $V_i(t)$ are location of the tail and speed of the $i$-th vehicle at time $t$. In terms of the discrete Lagrangian marker
\[w_i \doteq V_i+p\left(\frac{1}{x_{i+1}-x_i}\right),\]
the system \eqref{eq:follow_the_leader_intro1} reads in the simpler form
\begin{equation}\label{eq:follow_the_leader_intro2}
  \begin{cases}
  \displaystyle{\dot{x}_i=w_i-p\left(\frac{1}{x_{i+1}-x_i}\right)}, & \\
  \displaystyle{\dot{w}_i=0}\,.&
  \end{cases}
\end{equation}
The simpler form \eqref{eq:follow_the_leader_intro2} highlights the fact that the follow-the-leader system \eqref{eq:follow_the_leader_intro1} describes a particle system with \emph{many species}.
Hence \eqref{eq:follow_the_leader_intro1} is a microscopic \emph{multi population} model, in which the $i$-th vehicle has length $[1/p^{-1}(w_i)]$ and maximal speed $w_i$, which are constant in time and depend on the initial datum of the Lagrangian marker.

The goal of this paper is to approximate (under reasonable assumptions on $p$) the $2\times2$ system of nonlinear conservation laws \eqref{eq:ARZ_intro} with the first order microscopic model \eqref{eq:follow_the_leader_intro2}, that describes a multi population interacting many particle system. The technique we adopt is a slight variation of that introduced in \cite{MarcoMaxARMA2015}, which applies to the approximation of first order LWR models \cite{LWR1, LWR2} by a (simpler) first order version of the follow-the-leader system. Despite the second order nature of \eqref{eq:ARZ_intro}, the strategy developed in \cite{MarcoMaxARMA2015} applies also in this case (see also \cite{DF_fagioli_rosini_UMI, DF_fagioli_rosini_russo, DF_fagioli_rosini_russo_02} for other applications of the techniques introduced in \cite{MarcoMaxARMA2015}). 
This reveals that the multi-species nature of the ARZ model is quite relevant in the dynamics. Our rigorous results only deal with the convergence toward a weak solution to \eqref{eq:ARZ_intro}. The problem of the uniqueness of entropy solutions for \eqref{eq:ARZ_intro} is quite a hard task, and we do not address it here, see \cite{BCM-order2,BCJMU-order2}. Our main convergence result is stated in Theorem~\ref{thm:main} below.

We also perform numerical simulations that suggest that the solution of the microscopic model \eqref{eq:follow_the_leader_intro2} converges to a solution of the macroscopic model \eqref{eq:ARZ_intro} as the number of particles goes to infinity. In particular, we will make the tests considered in \cite{chalonsgoatin2007} to show that we do not have the spurious oscillations generated, for instance, by the Godunov method near contact discontinuities. We will also make the test considered in \cite{AKMR2002} to show that our algorithm is able to cope with the vacuum.

Our approach deeply differ from the one proposed in \cite{AKMR2002}.
Indeed, there the authors show how the ARZ model written in \emph{Lagrangian mass coordinates} can be viewed \emph{away from the vacuum} as the limit of a \emph{time discretization} of a \emph{second order} microscopic model as the number of vehicles increases, with a scaling in space and time (a zoom) for which the density and the velocity remain fixed.
On the contrary, we consider the ARZ model written in the Eulerian coordinates, so that the vacuum is a state eventually achieved by the solutions, and we introduce the underlying microscopic model without performing any time discretization.

We recall that the numerical transport-equilibrium scheme proposed in \cite{chalonsgoatin2007} is based on both a (Glimm) random sampling strategy and the Godunov method. As a consequence, this method is non-conservative. Moreover, it cannot be applied in the presence of the vacuum state.
On the contrary, our method is conservative and is able to cope with the vacuum. Let us finally underline that the approximate solutions constructed with both methods have sharp (without numerical diffusion) contact discontinuities and show numerical convergence. We remark that the model \eqref{eq:ARZ_intro} can be formulated also with a relaxation term with a prescribed equilibrium velocity, see \cite{AKMR2002}. We shall apply our approach to such an extended version of the ARZ model in a future work.

The present paper is structured as follows. In Section~\ref{sec:pre} we recall the basic properties of the discrete follow-the-leader model and of the continuum ARZ system. In particular we prove a discrete maximum principle in Lemma~\ref{lem:1} which was not present in the literature to our knowledge. In Section~\ref{sec:ato} we construct the atomization scheme and prove its convergence in the Theorem~\ref{thm:main}. In Section~\ref{sec:numerics} we perform numerical tests with simple Riemann problems, including cases with vacuum.

\section{Preliminaries}\label{sec:pre}

In this section we recall basic facts about the ARZ model \eqref{eq:ARZ_intro} and the discrete follow-the-leader system \eqref{eq:follow_the_leader_intro2}.

\subsection{The ARZ model}

Consider the Cauchy problem for the ARZ model \eqref{eq:ARZ_intro}
\begin{equation}\label{eq:CauchyARZ}
\begin{cases}
    \rho_t + \left(\rho \, v\right)_x=0,&t>0,~ x\in {\mathbb{R}},	
    \\
    (\rho\,w)_t + \left(\rho\,v\,w\right)_x=0,&t>0,~ x\in {\mathbb{R}},
    \\
    v(0,x)= \bar{v}(x),&x\in {\mathbb{R}},
    \\
    w(0,x)= \bar{w}(x),&x\in {\mathbb{R}},
\end{cases}
\end{equation}
where $(v,w)$ is the unknown variable and $(\bar{v},\bar{w})$ is the corresponding initial datum.
More precisely, $(v,w)$ belongs to $\mathcal{W} \doteq \left\{(v,w) \in {\mathbb{R}}_+^2 \colon v \le w\right\}$ and
\[\rho\doteq p^{-1}(w-v) \in {\mathbb{R}}_+\] is the corresponding density, where $p \in \mathbf{C^0}({\mathbb{R}}_+;{\mathbb{R}}_+)\cap\mathbf{C^2}(\left]0,+\infty\right[;{\mathbb{R}}_+)$ satisfies
\begin{align}\label{eq:p}
    &p(0^+) = 0,&
    &p'(\rho)>0&
    &\text{and}&
    &2\,p'(\rho)+\rho \, p''(\rho)>0&
    \text{for every }\rho>0.
\end{align}

\begin{example}[Examples of pressure functions]
In \cite{AKMR2002} the authors consider
\[
p(\rho) \doteq
\begin{cases}
\dfrac{v_{\rm ref}}{\gamma} \left[\dfrac{\rho}{\rho_m}\right]^\gamma,& \gamma>0\,,\\[7pt]
v_{\rm ref} \, \log\left[\dfrac{\rho}{\rho_m}\right],& \gamma=0\,,
\end{cases}
\]
where $\rho_m>0$ is the maximal density and $v_{\rm ref}>0$ is a reference velocity.
The above choice reduces to the original one proposed in \cite{Aw2000916} when $v_{\rm ref}/(\gamma\, \rho_m^\gamma)= 1$ and $\gamma>0$.
We also recall that in \cite{BerthelinDegondDelitalaRascle} the authors consider
\[
p(\rho) \doteq \left(\frac{1}{\rho} - \frac{1}{\rho_m}\right)^{-\gamma}.
\]
\end{example}

By definition, we have that the vacuum state $\rho=0$ corresponds to the half line $\mathcal{W}_0 \doteq \left\{(v,w)^T \in \mathcal{W} \colon  v=w \right\}$ and the non-vacuum states $\rho>0$ to $\mathcal{W}_0^c \doteq \mathcal{W}\setminus\mathcal{W}_0$.
As in \cite{BCM-order2,BCJMU-order2}, we consider initial data $(\bar{v},\bar{w}) \in {\mathbf{BV}}({\mathbb{R}};\mathcal{W})$ such that
\begin{align}\label{eq:physreas}
&((\bar{v},\bar{w})(x^-), (\bar{v},\bar{w})(x^+)) \in \mathfrak{G},&&x\in {\mathbb{R}},
\end{align}
where $(\bar{v},\bar{w})(x^-)$ and $(\bar{v},\bar{w})(x^+)$ denote the traces of $(\bar{v},\bar{w})$ along a Lipschitz curve of jump (see \cite{Volpert} for a precise formulation of the regularity of ${\mathbf{BV}}$ functions) and
\[
\mathfrak{G} \doteq \left\{
((v_\ell,w_\ell),(v_r,w_r)) \in \mathcal{W}^2
\colon\begin{array}{@{}l@{}}
\left.\begin{array}{l@{}}
(v_\ell,w_\ell) \in \mathcal{W}_0\\
(v_r,w_r) \in \mathcal{W}_0
\end{array}\right\}
\Rightarrow
(v_\ell,w_\ell) = (v_r,w_r)
\\
\left.\begin{array}{l@{}}
\text{and}
\end{array}\right.
\\
\left.\begin{array}{l@{}}
(v_\ell,w_\ell) \in \mathcal{W}_0^c\\
(v_r,w_r) \in \mathcal{W}_0
\end{array}\right\}
\Rightarrow
(v_r,w_r) = \left(w_\ell, w_\ell\right)
\end{array}
\right\}.
\]
The introduction of the condition \eqref{eq:physreas} to select the physically reasonable initial data of \eqref{eq:CauchyARZ} in the Riemann invariant coordinates is motivated in \cite[Remark~2.1]{BCM-order2}. We emphasise that such condition is not needed in the proof of our main analytical result. However, \eqref{eq:physreas} is partly motivated by our numerical tests.

Following \cite{BCM-order2}, we use the following definition for weak solutions of \eqref{eq:ARZ_intro}.
\begin{definition}[Weak solutions]\label{def:weak}
Let $(\bar{v},\bar{w})\in \mathbf{L^\infty}({\mathbb{R}};\,\mathcal{W})$. We say that a function $(v,w)\in \mathbf{L^\infty}({\mathbb{R}}_+\times {\mathbb{R}};\,\mathcal{W})\cap \mathbf{C^0}({\mathbb{R}}_+;\,\mathbf{L_{loc}^1}({\mathbb{R}};\,\mathcal{W}))$ is a weak solution of \eqref{eq:CauchyARZ} if it satisfies the initial condition $(v(0,x),w(0,x))=(\bar{v}(x),\bar{w}(x))$ for a.e.\ $x\in {\mathbb{R}}$ and for any test function $\phi\in \mathbf{C_c^\infty}(]0,+\infty[\times {\mathbb{R}};\,{\mathbb{R}})$
\begin{equation}\label{eq:weak_sol}
  \int_{{\mathbb{R}}_+}\int_{\mathbb{R}} p^{-1}(v,w) \, (\phi_t + v \, \phi_x) \begin{pmatrix} 1 \\ w\end{pmatrix} \,{\rm{d}} x \,{\rm{d}} t = \begin{pmatrix} 0 \\ 0\end{pmatrix}.
\end{equation}
\end{definition}

For the existence of weak solutions to \eqref{eq:CauchyARZ} away from the vacuum we refer to \cite{ferreira}, see \cite{godvik} for the existence with vacuum, and \cite{bagnerini} for the existence of entropy weak solutions.

Let us briefly recall the main properties of the solutions to \eqref{eq:CauchyARZ}. If the initial density $\bar{\rho} \doteq p^{-1}(\bar{w}-\bar{v})$ has compact support, then the support of $\rho$ has finite speed of propagation. The maximum principle holds true in the Riemann invariant coordinates $(v,w)$, but not in the conserved variables $(\rho,\rho\,w)$ as a consequence of hysteresis processes. Moreover, the total space occupied by the vehicles (if they were packed ``nose to tail'') is time independent: $\int_{\mathbb{R}}  \rho(t,x) \,{\rm{d}}  x = M \doteq \int_{\mathbb{R}}  \bar \rho(x) \,{\rm{d}}  x$ for all $t\ge0$.

We remark that a simple byproduct of our result is an alternative proof of the existence of weak solutions in the sense of Definition~\ref{def:weak}.

\subsection{Follow-the-leader model}

Multi population microscopic models of vehicular traffic are typically based on the so called Follow-The-Leader (FTL) model.

Consider $[N+1]$ ordered particles localised on ${\mathbb{R}}$. Denote by $t \mapsto x_i(t)$ the position of the $i$-th particle for $i = 0, \ldots, N$. Then, according to the FTL model, the evolution of the particles (which mimics the evolution of the position of $[N+1]$ vehicles along the road) is described inductively by the following Cauchy problem for a system of ordinary differential equations
\begin{equation}\label{eq:FTL_intro}
\begin{cases}
  \dot{x}_N(t) = w_{N-1} ,\\
  \dot{x}_i(t) = v_i\left(\dfrac{1}{x_{i+1}(t)-x_i(t)}\right),&
  i=0,\ldots,N-1,\\
  x_i(0) = \bar{x}_i ,&
  i=0,\ldots,N,
\end{cases}
\end{equation}
where
\[v_i(\rho)\doteq w_i - p(\rho),\] $w_0,\ldots,w_N$ are $[N+1]$ strictly positive constants, $p \in \mathbf{C^2}({\mathbb{R}}_+;{\mathbb{R}}_+)$ satisfies \eqref{eq:p}, and $\bar{x}_0 <  \ldots < \bar{x}_N$ are the initial positions of the particles.
The quantity $w_i = v_i(0)$ is the maximum possible velocity allowed for the $i$-th particle.
Clearly, only the leading particle $x_N$ reaches its maximal velocity $w_N$, as the vacuum state is achieved only ahead of $x_N$.

The quantity $R_i \doteq p^{-1}(w_i)>0$ is the \emph{maximum discrete density} of the $i$-th particle, so that $R_i^{-1}$ is the \emph{length} of the $i$-th particle, $v_i(R_i)=0$ and we assume that at time $t=0$
\begin{align}\label{eq:FTL_assumption1}
    &\bar{x}_{i+1} - \bar{x}_{i} \ge \dfrac{1}{R_i},& i = 0, \ldots, N-1.
\end{align}

System \eqref{eq:FTL_intro} can be solved inductively starting from $i=N$. Indeed, we immediately deduce that
\[x_{N}(t) = \bar{x}_{N} + w_{N-1} \,t .\]
Then, we can compute $t \mapsto x_i(t)$ once we know $t \mapsto x_{i+1}(t)$. In fact, according with the system \eqref{eq:FTL_intro} the velocity of the $i$-th particle depends on its distance from the $(i+1)$-th particle alone via the smooth velocity map $v_i$.
In order to ensure that the (unique) solution to \eqref{eq:FTL_intro} exists globally in time, we need to prove that the distances $[x_{i+1}(t)-x_i(t)]$ never degenerate.
This is proven in the next lemma, which extends a similar one in \cite{MarcoMaxARMA2015} to our multi population system.

\begin{lemma}[Discrete maximum principle]\label{lem:1}
    For all $i = 0, \ldots, N-1$, we have
    \begin{align*}
        &\dfrac{1}{R_i} \le x_{i+1}(t)-x_i(t) \le \bar{x}_N - \bar{x}_{0} + w_{N-1} \,t &\text{for all times }t\ge0.
    \end{align*}
\end{lemma}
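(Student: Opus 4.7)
The plan is to proceed by downward induction on $i$ from $i=N-1$ to $i=0$, working on the maximal interval of existence of \eqref{eq:FTL_intro} and proving simultaneously the two statements:
(a) $d_i(t) \doteq x_{i+1}(t)-x_i(t) \ge 1/R_i$;
(b) $\dot x_i(t) \ge 0$.
Once (a) is established on the maximal interval, the standard continuation criterion for ODEs forces the solution to be global, legitimising the phrase ``for all $t\ge 0$''. The upper bound then follows as a short afterthought from (b).

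For the base case $i=N-1$, the explicit formula $x_N(t)=\bar x_N + w_{N-1}\,t$ yields
\[
\dot d_{N-1}(t) = w_{N-1} - v_{N-1}\!\left(\tfrac{1}{d_{N-1}(t)}\right) = p\!\left(\tfrac{1}{d_{N-1}(t)}\right) > 0,
\]
so $d_{N-1}$ is strictly increasing and the initial bound $d_{N-1}(0)\ge 1/R_{N-1}$ from \eqref{eq:FTL_assumption1} is preserved; consequently $p(1/d_{N-1})\le p(R_{N-1})=w_{N-1}$, giving $\dot x_{N-1}\ge 0$.

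For the inductive step, assume (a) and (b) hold for index $i+1$. Since $\dot x_{i+1}\ge 0$, we get
\[
\dot d_i(t) = \dot x_{i+1}(t) - \dot x_i(t) \ge -\dot x_i(t) = p\!\left(\tfrac{1}{d_i(t)}\right) - w_i .
\]
If (a) failed at some $t^*$, by continuity and $d_i(0)\ge 1/R_i$ there is a last time $t_0\in[0,t^*)$ with $d_i(t_0)=1/R_i$ and $d_i(t)<1/R_i$ on $(t_0,t^*]$; on that interval $p(1/d_i(t))>p(R_i)=w_i$ (using strict monotonicity of $p$), hence $\dot d_i>0$ and integrating from $t_0$ to $t^*$ contradicts $d_i(t^*)<1/R_i=d_i(t_0)$. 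Thus (a) holds, and (b) follows since $p(1/d_i)\le w_i$.

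The upper bound is now immediate: (b) propagated over all indices gives $x_i(t)\ge \bar x_i \ge \bar x_0$, while the ordering preserved by (a) together with the explicit form of $x_N$ yields $x_{i+1}(t)\le x_N(t)=\bar x_N + w_{N-1}\,t$. Subtracting produces the claimed bound. The only subtle point, and the place where the argument could go wrong if handled carelessly, is that at the critical level $d_i=1/R_i$ one only has $\dot d_i\ge 0$ rather than $\dot d_i>0$; this is why I use the ``last-crossing-time'' argument rather than a naive comparison, exploiting the \emph{strict} monotonicity of $p$ just below $R_i$.
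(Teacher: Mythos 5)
Your proof is correct and takes essentially the same approach as the paper: a downward recursion from $i=N-1$, the key observation that the lower bound on the $(i+1)$-th gap forces $\dot x_{i+1}\ge 0$, and a contradiction at a critical time where the $i$-th gap equals $1/R_i$ and then strictly decreases, exploiting the strict monotonicity of $p$. Carrying along statement (b) explicitly and using a last-crossing-time formulation is a tidy way to package what the paper derives implicitly inside its contradiction argument (where it shows $\dot x_i<0$ and $\dot x_{i+1}\ge 0$ separately); both routes lead to the same conclusion, including the observation that global existence follows once the gaps are bounded below.
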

\begin{proof}
We first prove the lower bound.
At time $t=0$ the lower bound is satisfied because of \eqref{eq:FTL_assumption1}. We shall prove that
\begin{align}\label{eq:ast}\tag{$\ast$}
&\inf_{t\geq0} \left[\vphantom{\vec{V}} x_{i+1}(t)-x_i(t)\right] \geq \dfrac{1}{R_i},&
&i=0,\ldots,N-1,
\end{align}
by a recursive argument on $i$. The statement is true for $i=N-1$. Indeed, since $\dot{x}_{N-1}(t) \leq w_{N-1} = \dot{x}_{N}(t)$ we have that $x_{N-1}(t)<x_N(t)$ for all $t\geq0$ and
\begin{align*}
   x_N(t)-x_{N-1}(t)
   =&~ \bar{x}_N-\bar{x}_{N-1} + \int_0^t \left[w_{N-1}-v_{N-1}\left(\dfrac{1}{x_N(s)-x_{N-1}(s)}\right)\right] {\rm{d}}  s\\
   =&~ \bar{x}_N-\bar{x}_{N-1} + \int_0^t p\left(\dfrac{1}{x_N(s)-x_{N-1}(s)}\right) {\rm{d}}  s
   \geq \bar{x}_N-\bar{x}_{N-1}
   \geq \dfrac{1}{R_{N-1}},
\end{align*}
because $p(\rho)>0$ for all $\rho>0$. Assume now that
\[
   \inf_{t\geq0} \left[\vphantom{\vec{V}} x_{i+2}(t)-x_{i+1}(t)\right] \geq \dfrac{1}{R_{i+1}}
\]
and by contradiction that there exist $t_2>t_1\ge0$ such that
\begin{align}
\nonumber
&x_{i+1}(t)-x_i(t) > \dfrac{1}{R_i}&
&\text{for all }t \in \left[0,t_1\right[,
\\
\nonumber
&x_{i+1}(t_1)-x_i(t_1) = \dfrac{1}{R_i},
\\ \label{eq:star}\tag{$\star$}
&0<x_{i+1}(t)-x_i(t) < \dfrac{1}{R_i}&
&\text{for all }t \in \left]t_1, t_2\right].
\end{align}
Since $v_i$ is strictly decreasing, for any $t \in \left]t_1, t_2\right]$ we have that $\dot{x}_{i}(t) < v_{i}(R_{i}) =  0 = v_{i+1}(R_{i+1}) \leq \dot{x}_{i+1}(t)$.
Consequently, for any $t \in \left]t_1, t_2\right]$
\begin{align*}
&x_{i}(t)
\leq
x_{i}(t_1),&
&x_{i+1}(t)
\geq
x_{i+1}(t_1),
\end{align*}
and therefore
\[
x_{i+1}(t) - x_i(t)
\geq
x_{i+1}(t_1) - x_i(t_1) = \dfrac{1}{R_{i}},
\]
which contradicts \eqref{eq:star}.
Hence, \eqref{eq:ast} is satisfied and the lower bound is proven.

Finally, the upper bound easily follows from the lower bound.
Indeed, by the first equation of \eqref{eq:FTL_intro} and the lower bound $1 \le [x_1(t)-x_0(t)] \, R_0$ just proved, we have that $x_{N}(t) = \bar{x}_N + w_{N-1} \,t$ and $x_0(t)\ge \bar{x}_0 + v_0(R_0) \, t = \bar{x}_0$.
\end{proof}
We emphasise that the above discrete maximum principle is a direct consequence of the \emph{transport} nature behind the FTL system \eqref{eq:FTL_intro}, similarly to what happens in the first order FTL system considered in \cite{MarcoMaxARMA2015}. Indeed, the global bound for the discrete density is propagated from the last particle $x_N$ back to all the other particles, as emphasised by the recursive argument in the proof of Lemma~\ref{lem:1}.

\section{The atomization scheme and the strong convergence}\label{sec:ato}
We now introduce our atomization scheme for the Cauchy problem \eqref{eq:CauchyARZ}.
Let $(\bar{v},\bar{w}) \in {\mathbf{BV}}({\mathbb{R}};\mathcal{W})$ satisfy \eqref{eq:physreas} and such that $\bar\rho \doteq p^{-1}(\bar{w}-\bar{v})$ belongs to $\mathbf{L^1}({\mathbb{R}};{\mathbb{R}}_+)$ and is compactly supported. Denote by $\bar{x}_{\min} < \bar{x}_{\max}$ the extremal points of the convex hull of the compact support of $\bar\rho$, namely \[\bigcap_{\left[a,b\right]\supseteq{\mathrm{supp}}\left(\bar\rho\right)} \left[a,b\right] = \left[\bar{x}_{\min}, \bar{x}_{\max}\right].\]
Fix $n \in {\mathbb{N}}$ sufficiently large. Let $M \doteq \left\|\bar\rho\right\|_{\mathbf{L^1}({\mathbb{R}})} > 0$ and split the subgraph of $\bar \rho$ in $N_n \doteq 2^n$ regions of measure $\kappa_n \doteq 2^{-n}M$ as follows. Set
\begin{subequations}\label{eq:initial_ftl}
\begin{equation}
  \bar{x}_0^n \doteq \bar{x}_{\min},
\end{equation}
and recursively
\begin{align}
  &\bar{x}_i^n \doteq \sup\left\{x\in {\mathbb{R}} \, \colon\,\int_{\bar{x}^n_{i-1}}^x\bar\rho(x) \,{\rm{d}} x<\kappa_n\right\},&
  i=1,\ldots,N_n.
\end{align}
\end{subequations}
It is easily seen that $\bar{x}^n_{N_n}=\bar{x}_{\max}$, and $\bar{x}_{N_n-i}^n = \bar{x}_{N_{n+m}-2^mi}^{n+m}$ for all $i=0,\ldots,N_n$.
We approximate then the initial Lagrangian marker $\bar{w}$ by taking
\begin{align}\label{eq:dw}
&\bar{w}_i^n \doteq \underset{[\bar{x}_i^n,\bar{x}_{i+1}^n]}{\rm ess\,sup} (\bar{w}),
&i=0,\ldots,N_n-1.
\end{align}
We have then that the assumption \eqref{eq:FTL_assumption1} is satisfied as follows,
\begin{align*}
  &\kappa_n=\int_{\bar{x}_i^n}^{\bar{x}_{i+1}^n} \bar\rho(x) \,{\rm{d}} x \le \left(\bar{x}_{i+1}^n - \bar{x}_{i}^n\right) R_i^n ,&
  i=0,\ldots,N_n-1,
\end{align*}
with $R_i^n \doteq p^{-1}(\bar{w}_i^n)$.
We take the values $\bar{x}_0^n,\ldots,\bar{x}_{N_n}^n$ as the initial positions of the $[N_n+1]$ particles in the $n$--depending FTL model
\begin{equation}\label{eq:ftl}
\begin{cases}
    \dot x_{N_n}^n(t) = \bar{w}_{N_n-1}^n ,\\[7pt]
    \dot x_i^n(t) = v_i^n \left(\dfrac{\kappa_n}{x_{i+1}^n(t) - x_i^n(t)}\right) , & i=0, \ldots, N_n-1 ,\\[7pt]
    x_i^n(0) = \bar{x}_i^n , & i=0, \ldots, N_n ,
\end{cases}
\end{equation}
where
\begin{align}\label{eq:vni}
&v_i^n(\rho) \doteq \bar{w}_i^n - p(\rho),&
    i=0,\ldots,N_n-1.
\end{align}
The existence of a global-in-time solution to \eqref{eq:ftl} follows from Lemma~\ref{lem:1} with $v_i(\rho)$ replaced by $v^n_i(\kappa_n\,\rho)$. Moreover, from \eqref{eq:ftl} we immediately deduce that
\begin{align*}
    x_{N_n}^n(t) = \bar{x}_{\max} + \bar{w}_{N_n-1}^n \,t .
\end{align*}
Finally, since $v_i^n$ is decreasing, and its argument $\kappa_n/[x_{i+1}^n(t)-x_i^n(t)]$ is always bounded above by $R_i^n$, we have
\[x_0^n(t)\geq \bar{x}_{\min} + v_0(R_0^n) \, t = \bar{x}_{\min}.\]
By introducing in \eqref{eq:ftl} the new variable
\begin{align}\label{eq:Deltan}
    &y^n_i(t) \doteq \dfrac{\kappa_n}{x_{i+1}^n(t) - x_i^n(t)},&
    i=0,\ldots,N_n-1,
\end{align}
we obtain
\begin{equation}\label{eq:dyi}
\begin{cases}
    \dot{y}^n_{N_n-1} = -\dfrac{(y^n_{N_n-1})^2}{\kappa_n} \, p(y^n_{N_n-1}),
    \\
    \dot{y}^n_i = -\dfrac{(y^n_i)^2}{\kappa_n} \left[v_{i+1}^n(y^n_{i+1})-v_i^n(y^n_i)\right], & i=0, \ldots, N_n-2 ,
    \\
    y^n_i(0) = \bar{y}^n_i \doteq \dfrac{\kappa_n}{\bar{x}_{i+1}^n - \bar{x}_i^n} ,&
    i=0,\ldots,N_n-1.
\end{cases}
\end{equation}
Observe that $\kappa_n/\left[\bar{x}_{\max} - \bar{x}_{\min} + \bar{w}_{N_n-1}^n \, t\right] \le y^n_i(t) \le R_i^n$ for all $t\ge0$ in view of Lemma~\ref{lem:1}. The quantity $y^n_i$ can be seen as a discrete version of the density $\rho$ in Lagrangian coordinates, and \eqref{eq:dyi} is the discrete Lagrangian version of the Cauchy problem \eqref{eq:CauchyARZ}.

Define the piecewise constant (with respect to $x$) Lagrangian marker
\begin{align}\label{eq:hwn}
&W^n(t,x) \doteq
\begin{cases}
\bar{w}^n_0 &\text{if } x \in \left]-\infty, x_0^n(t)\right[,
\\
\bar{w}^n_i&\text{if } x \in \left[x_i^n(t), x_{i+1}^n(t)\right[,~ i = 0, \ldots, N_n-1,
\\
\bar{w}^n_{N_n-1}&\text{if } x \in \left[x^n_{N_n}(t),+\infty\right[,
\end{cases}
\intertext{and the piecewise constant (with respect to $x$) velocity}
\label{eq:hvn}
&V^n(t,x) \doteq
\begin{cases}
\bar{w}^n_0&\text{if } x \in \left]-\infty, x_0^n(t)\right[,
\\
v^n_i(y^n_i(t))&\text{if } x \in \left[x_i^n(t), x_{i+1}^n(t)\right[,~ i = 0, \ldots, N_n-1,
\\
\bar{w}^n_{N_n-1}&\text{if } x \in \left[x^n_{N_n}(t),+\infty\right[.
\end{cases}
\end{align}

\begin{proposition}[Definition of $w$]\label{prop:w}
Let $W^n$ be defined as in \eqref{eq:hwn}.
Then there exists a function $w \in \mathbf{L_{loc}^\infty}\left({\mathbb{R}}_+ \times {\mathbb{R}}\right)$ such that
\begin{align*}
&(W^n)_{n\in{\mathbb{N}}} \text{ converges to } w
\text{ in } \mathbf{L_{loc}^1}\left({\mathbb{R}}_+ \times {\mathbb{R}}\right),
\end{align*}
and for any $t,s\geq0$
\begin{subequations}\label{eq:westimates}
\begin{align}
&{\mathrm{TV}}\left[w(t)\right] \le {\mathrm{TV}}\left[\bar{w}\right] ,\\
&\left\|w(t)\right\|_{\mathbf{L^\infty}({\mathbb{R}})} \le \left\|\bar{w}\right\|_{\mathbf{L^\infty}({\mathbb{R}})} ,\\
&\int_{{\mathbb{R}}} \left|w(t,x) - w(s,x)\right| \,{\rm{d}} x \le {\mathrm{TV}}\left[\bar{w}\right] \, \left\|\bar{w}\right\|_{\mathbf{L^\infty}({\mathbb{R}})} \, \left|t-s\right| .
\end{align}
\end{subequations}
\end{proposition}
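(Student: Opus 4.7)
The plan is to establish uniform-in-$n$ estimates on $W^n$ of three kinds---an $\mathbf{L^\infty}$ bound, a BV-in-$x$ bound, and an $\mathbf{L^1}$-in-time modulus of continuity---and then apply a Helly/Aubin--Lions style compactness argument to extract a limit $w$ satisfying the same three bounds. The key structural observation is that by \eqref{eq:follow_the_leader_intro2} the values $\bar w_i^n$ are constant in time, so $W^n(t,\cdot)$ is a step function whose only time dependence comes from the moving breakpoints $x_i^n(t)$.

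First I would handle the two static estimates. The $\mathbf{L^\infty}$ bound is immediate from \eqref{eq:dw}, since each $\bar w_i^n$ is an essential supremum of $\bar w$ over a subset of $\mathbb{R}$. For the space-BV bound, $\mathrm{TV}[W^n(t,\cdot)] = \sum_{i=0}^{N_n-2}|\bar w_{i+1}^n-\bar w_i^n|$, and I would control this by $\mathrm{TV}[\bar w]$ as follows: working with a good representative of the BV function $\bar w$ (say the right-continuous one, for which essential suprema on closed intervals are attained in the limit), for each $i$ pick $\xi_i \in [\bar x_i^n,\bar x_{i+1}^n]$ with $\bar w(\xi_i)=\bar w_i^n$; the sequence $\xi_0<\xi_1<\dots$ is a partition of $[\bar x_{\min},\bar x_{\max}]$, so $\sum_i|\bar w(\xi_{i+1})-\bar w(\xi_i)|\le \mathrm{TV}[\bar w]$.

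Next I would prove the time modulus. The key is that each breakpoint is $\|\bar w\|_{\mathbf{L^\infty}}$-Lipschitz: from \eqref{eq:ftl}--\eqref{eq:vni}, $\dot x_i^n = \bar w_i^n - p(y_i^n) \le \bar w_i^n$ because $p\ge 0$, while the lower bound $\dot x_i^n\ge 0$ uses Lemma~\ref{lem:1}, which yields $y_i^n \le R_i^n = p^{-1}(\bar w_i^n)$ and hence $p(y_i^n)\le \bar w_i^n$. Fixing $0\le s\le t$, the set $\{W^n(t,\cdot)\ne W^n(s,\cdot)\}$ is covered by the intervals $[x_i^n(s),x_i^n(t)]$, and on the $i$-th such interval the two values are $\bar w_{i-1}^n$ and $\bar w_i^n$, so
\begin{equation*}
\int_{\mathbb{R}} |W^n(t,x)-W^n(s,x)|\,{\rm{d}} x \;\le\; \sum_{i}|\bar w_i^n-\bar w_{i-1}^n|\,|x_i^n(t)-x_i^n(s)| \;\le\; \mathrm{TV}[\bar w]\,\|\bar w\|_{\mathbf{L^\infty}}\,|t-s|.
\end{equation*}

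Finally I would conclude by compactness. For each fixed $t\ge 0$ the family $\{W^n(t,\cdot)\}_n$ is uniformly bounded in $\mathbf{L^\infty}\cap {\mathbf{BV}}$, so Helly's theorem provides, at each rational $t$, a subsequence converging in $\mathbf{L^1_{loc}}({\mathbb{R}})$; a standard Cantor diagonal argument combined with the time modulus just proved produces a further subsequence $(W^{n_k})_k$ converging to some $w$ in $\mathbf{C^0}({\mathbb{R}}_+;\mathbf{L^1_{loc}}({\mathbb{R}}))$, hence in $\mathbf{L^1_{loc}}({\mathbb{R}}_+\times{\mathbb{R}})$. The three bounds in \eqref{eq:westimates} then pass to the limit: the $\mathbf{L^\infty}$ bound from a.e.\ convergence, the $\mathrm{TV}$ bound from lower semicontinuity of total variation under $\mathbf{L^1_{loc}}$ convergence, and the time modulus from Fatou. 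The main technical point I expect to stumble on is the space-BV estimate for $W^n$: reducing $\sum_i|\bar w_{i+1}^n-\bar w_i^n|$ to $\mathrm{TV}[\bar w]$ requires being careful about which representative of $\bar w$ is used so that essential suprema can be realized as pointwise values along a strictly increasing sequence of test points; the rest is routine.
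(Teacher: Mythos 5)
Your estimates and the Helly extraction step follow the same route as the paper, and your more detailed arguments for the $\mathbf{BV}$ bound (choosing near-realizing test points $\xi_i$ in each cell) and the time modulus (Lipschitz breakpoints with rate $\left\|\bar{w}\right\|_{\mathbf{L^\infty}({\mathbb{R}})}$, covering $\{W^n(t,\cdot)\neq W^n(s,\cdot)\}$ by the swept intervals) fill in steps the paper leaves implicit; both are correct modulo the $\varepsilon$-approximation you flagged.

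However, there is a genuine gap: the proposition asserts that the \emph{whole} sequence $(W^n)_n$ converges, not merely a subsequence (contrast with Proposition~\ref{prop:v}, which is stated ``up to a subsequence''), and your Helly--Cantor argument only yields a subsequence. The paper upgrades subsequential convergence to full convergence by observing a monotonicity property that you have not used: because the atomization points $\bar{x}_i^n$ are nested dyadically ($\bar{x}_i^n = \bar{x}_{2i}^{n+1}$) and $\bar{w}_i^n$ is defined in \eqref{eq:dw} as an essential supremum over $[\bar{x}_i^n,\bar{x}_{i+1}^n]$, refining the partition can only decrease the essential suprema, so $W^{n+1}(t,x)\le W^n(t,x)$; combined with the uniform lower bound $\underset{[\bar{x}_{\min},\bar{x}_{\max}]}{\rm ess\,inf}(\bar{w})$, this gives pointwise a.e.\ convergence of the entire sequence by monotone (plus dominated) convergence. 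Without this observation you cannot conclude the statement as written. You should add this step; note also that making it fully rigorous for $t>0$ requires addressing that the particle trajectories $x_i^n(t)$ and $x_{2i}^{n+1}(t)$ need not coincide for positive times, which the paper treats somewhat tersely.
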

\begin{proof}
Directly from the definition \eqref{eq:hwn} of $W^n$ follows that for any $t\geq0$
\begin{align*}
&{\mathrm{TV}}\left[W^n(t)\right] \le {\mathrm{TV}}\left[\bar{w}\right] &
\text{and}&
&\left\|W^n(t)\right\|_{\mathbf{L^\infty}({\mathbb{R}})} \le \left\|\bar{w}\right\|_{\mathbf{L^\infty}({\mathbb{R}})} .
\end{align*}
Moreover, since the speed of propagation of the particles is bounded by $\left\|\bar{w}\right\|_{\mathbf{L^\infty}({\mathbb{R}})}$, by the above uniform bound for the total variation we have that
\[
\int_{{\mathbb{R}}} \left|W^n(t,x) - W^n(s,x)\right| \,{\rm{d}} x
\le
{\mathrm{TV}}\left[\bar{w}\right] \, \left\|\bar{w}\right\|_{\mathbf{L^\infty}({\mathbb{R}})} \, \left|t-s\right|.
\]
Hence, by applying Helly's theorem in the form \cite[Theorem~2.4]{BressanBook}, up to a subsequence, $(W^n)_{n\in{\mathbb{N}}}$ converges in $\mathbf{L_{loc}^1}\left({\mathbb{R}}_+ \times {\mathbb{R}}\right)$ to a function $w$ which is right continuous with respect to $x$ and satisfies \eqref{eq:westimates}.

Finally, observe that by the definition in \eqref{eq:dw} of $\bar{w}_i^n$, for any $n\in{\mathbb{N}}$ we have
\begin{align*}
&\underset{[\bar{x}_{\min},\bar{x}_{\max}]}{\rm ess\,inf} (\bar{w}) \le W^{n+1}(t,x) \le W^n(t,x) \le \underset{[\bar{x}_{\min},\bar{x}_{\max}]}{\rm ess\,sup} (\bar{w})
&\text{for all }(t,x)\in {\mathbb{R}}_+\times {\mathbb{R}} .
\end{align*}
Therefore the whole sequence $(W^n)_{n\in{\mathbb{N}}}$ converges to $w$ and a.e.\ on ${\mathbb{R}}_+\times {\mathbb{R}}$.
\end{proof}

\begin{proposition}[Definition of $v$]\label{prop:v}
Let $V^n$ be defined as in \eqref{eq:hvn}.
Then there exists a function $v \in \mathbf{L_{loc}^\infty}\left({\mathbb{R}}_+ \times {\mathbb{R}}\right)$, such that
\begin{align*}
&(V^n)_{n\in{\mathbb{N}}} \text{ converges up to a subsequence to } v
\text{ in } \mathbf{L_{loc}^1}({\mathbb{R}}_+ \times {\mathbb{R}}),
\end{align*}
and for any $t,s\geq0$
\begin{subequations}\label{eq:vestimates}
\begin{align}
&{\mathrm{TV}}\left[v(t)\right] \le C_v \doteq 2\,\left\|\bar{w}\right\|_{\mathbf{L^\infty}({\mathbb{R}})} + {\mathrm{TV}}[\bar{w}] + {\mathrm{Lip}}(p) \, {\mathrm{TV}}[\bar{\rho}],\\
&\left\|v(t)\right\|_{\mathbf{L^\infty}({\mathbb{R}})} \le \left\|\bar{w}\right\|_{\mathbf{L^\infty}({\mathbb{R}})} ,\\
&\int_{{\mathbb{R}}} \left|v(t,x) - v(s,x)\right| {\rm{d}} x \le C_v \, \left\|\bar{w}\right\|_{\mathbf{L^\infty}({\mathbb{R}})} \, \left|t-s\right| .
\end{align}
\end{subequations}
\end{proposition}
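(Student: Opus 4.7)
The plan is to follow the strategy of Proposition \ref{prop:w}: derive uniform-in-$(n,t)$ BV and $\mathbf{L^\infty}$ bounds for $V^n(t,\cdot)$, upgrade to an $\mathbf{L^1}$-in-time modulus of continuity, and apply Helly's theorem. Setting $V_i^n(t) := v_i^n(y_i^n(t)) = \bar w_i^n - p(y_i^n(t))$, I would first translate \eqref{eq:dyi} into the discrete transport-type system
\[
\dot V_i^n = \lambda_i^n\,(V_{i+1}^n - V_i^n),
\qquad
\lambda_i^n := \frac{p'(y_i^n)\,(y_i^n)^2}{\kappa_n} > 0,
\qquad i = 0,\ldots,N_n-1,
\]
with the convention $V_{N_n}^n := \bar w_{N_n-1}^n$, which absorbs the boundary equation in \eqref{eq:dyi}. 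It is also convenient to set $V_{-1}^n := \bar w_0^n$; both $V_{-1}^n$ and $V_{N_n}^n$ are then time-constant ``ghost'' values and, in view of \eqref{eq:hvn}, ${\mathrm{TV}}[V^n(t,\cdot)] = \sum_{i=-1}^{N_n-1}|V_{i+1}^n(t) - V_i^n(t)|$. The pointwise bound $0 \le V_i^n \le \bar w_i^n$ (coming from $y_i^n \in [0, R_i^n]$) immediately yields $\|V^n(t,\cdot)\|_{\mathbf{L^\infty}({\mathbb{R}})} \le \|\bar w\|_{\mathbf{L^\infty}({\mathbb{R}})}$.

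For the BV bound I would first estimate at $t=0$: the two ghost jumps contribute $p(\bar y_0^n) + p(\bar y_{N_n-1}^n) \le 2\|\bar w\|_{\mathbf{L^\infty}({\mathbb{R}})}$, while the interior jumps satisfy $|V_{i+1}^n(0) - V_i^n(0)| \le |\bar w_{i+1}^n - \bar w_i^n| + {\mathrm{Lip}}(p)\,|\bar y_{i+1}^n - \bar y_i^n|$. Summing, and using that the $\bar y_i^n$ are piecewise averages of $\bar\rho$ so that $\sum_i|\bar y_{i+1}^n - \bar y_i^n| \le {\mathrm{TV}}[\bar\rho]$, gives ${\mathrm{TV}}[V^n(0,\cdot)] \le C_v$. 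For $t > 0$ I would show that this bound is preserved. For interior indices one has $\frac{d}{dt}|V_{i+1}^n - V_i^n| \le \lambda_{i+1}^n|V_{i+2}^n - V_{i+1}^n| - \lambda_i^n|V_{i+1}^n - V_i^n|$, while at the right boundary, using $\dot V_{N_n}^n = 0$, $\frac{d}{dt}|V_{N_n}^n - V_{N_n-1}^n| = -\lambda_{N_n-1}^n|V_{N_n}^n - V_{N_n-1}^n|$, and at the left boundary, using $V_0^n - V_{-1}^n = -p(y_0^n) \le 0$, $\frac{d}{dt}|V_0^n - V_{-1}^n| = -\lambda_0^n(V_1^n - V_0^n)$. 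Summing, the interior sum telescopes to $\lambda_{N_n-1}^n|V_{N_n}^n - V_{N_n-1}^n| - \lambda_0^n|V_1^n - V_0^n|$, which cancels against both boundary contributions, leaving
\[
\frac{d}{dt}{\mathrm{TV}}[V^n(t,\cdot)] \le \lambda_0^n\bigl[\,-(V_1^n - V_0^n) - |V_1^n - V_0^n|\,\bigr] \le 0.
\]

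The $\mathbf{L^1}$-in-time estimate then follows from the uniform BV bound and the bound $\|\dot x_i^n\|_{\mathbf{L^\infty}} \le \|\bar w\|_{\mathbf{L^\infty}({\mathbb{R}})}$ for all the particle velocities, combined with a direct bound on the contribution of the in-cell evolution of the values $V_i^n$: the identity $|\dot V_i^n|(x_{i+1}^n - x_i^n) = p'(y_i^n)\,y_i^n\,|V_{i+1}^n - V_i^n|$, summed in $i$, is controlled by $C_v$ via the uniform BV bound. Helly's theorem in the form \cite[Theorem~2.4]{BressanBook} then gives the subsequential convergence, and the limit $v$ inherits \eqref{eq:vestimates} by lower semicontinuity. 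The main obstacle I expect is precisely the BV monotonicity above: a naive telescoping of $\sum_i\frac{d}{dt}|V_{i+1}^n - V_i^n|$ over interior indices leaves the residues $\lambda_{N_n-1}^n|V_{N_n}^n - V_{N_n-1}^n|$ and $-\lambda_0^n|V_1^n - V_0^n|$, and it is only the inclusion of the two ghost states $V_{-1}^n = \bar w_0^n$ and $V_{N_n}^n = \bar w_{N_n-1}^n$, together with the sign of $V_0^n - V_{-1}^n = -p(y_0^n) \le 0$, that makes these boundary contributions exactly compatible with $\frac{d}{dt}{\mathrm{TV}}[V^n] \le 0$.
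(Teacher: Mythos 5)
Your proof is correct and follows essentially the same strategy as the paper: a uniform-in-$(n,t)$ BV bound via $\frac{{\rm d}}{{\rm d}t}\mathrm{TV}[V^n(t,\cdot)]\le 0$, an $\mathbf{L^\infty}$ bound, an $\mathbf{L^1}$-in-time modulus, and Helly's compactness theorem. Rewriting \eqref{eq:dyi} as the discrete transport system $\dot V_i^n = \lambda_i^n(V_{i+1}^n - V_i^n)$ with ghost states $V_{-1}^n=\bar w_0^n$, $V_{N_n}^n=\bar w_{N_n-1}^n$ and then telescoping is an equivalent, cleaner bookkeeping of what the paper does directly with $\mathrm{sgn}$-functions and the explicit $\dot y_i^n$; in particular your boundary cancellation using $V_0^n-V_{-1}^n=-p(y_0^n)\le 0$ is exactly the paper's term $\bigl[1-\mathrm{sgn}(v_0^n-v_1^n)\bigr]p'(y_0^n)\dot y_0^n\le 0$ in disguise. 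One place where you are more careful than the paper: for the $\mathbf{L^1}$-in-time estimate, the paper invokes only the finite speed of the jump points and the BV bound (as in Proposition~\ref{prop:w}, where the cell values are time-constant), while for $V^n$ the in-cell values $V_i^n(t)$ also evolve; you correctly identify the extra contribution $\sum_i|\dot V_i^n|(x_{i+1}^n-x_i^n)=\sum_ip'(y_i^n)\,y_i^n\,|V_{i+1}^n-V_i^n|$. To close it one needs $\sup_{\rho\in(0,R_{\max}]}p'(\rho)\rho<\infty$; this does follow from \eqref{eq:p} (the quantity $p(\rho)+p'(\rho)\rho$ is increasing and nonnegative, and must tend to $0$ as $\rho\to 0^+$ since otherwise $p'$ would fail to be integrable near $0$, contradicting $p(0^+)=0$), but the resulting Lipschitz constant is then $C_v\bigl(\|\bar w\|_{\mathbf{L^\infty}}+\sup p'(\rho)\rho\bigr)$ rather than $C_v\|\bar w\|_{\mathbf{L^\infty}}$ as stated. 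Either constant is enough for Helly, so the compactness conclusion and \eqref{eq:vestimates} (with the constant suitably adjusted) are unaffected.
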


\begin{proof}
For notational simplicity, we shall omit the dependence on $t$ whenever not necessary.
By construction, see \eqref{eq:FTL_assumption1}, \eqref{eq:Deltan} and \eqref{eq:hvn}, we have that
\begin{align*}
{\mathrm{TV}}\left[V^n(0)\right]
=&~
\left|\bar{w}_0^n-v_0^n\left(\bar{y}_0^n\right)\right|
+\sum_{i=0}^{N_n-2} \left|v_i^n(\bar{y}_i^n)-v_{i+1}^n(\bar{y}_{i+1}^n)\right|
+
\left|v_{N_n-1}^n\left(\bar{y}_{N_n-1}^n\right)-\bar{w}_{N_n-1}^n\right|
\\\leq&~
p\left(\bar{y}_0^n\right)
+\sum_{i=0}^{N_n-2} \left|\bar{w}_i^n-\bar{w}_{i+1}^n\right|
+{\mathrm{Lip}}(p) \sum_{i=0}^{N_n-2} \left|\bar{y}_i^n-\bar{y}_{i+1}^n\right|
+
p\left(\bar{y}_{N_n-1}^n\right)
\\\leq&~
p\left(R_0^n\right)
+{\mathrm{TV}}[\bar{w}]
+{\mathrm{Lip}}(p) \sum_{i=0}^{N_n-2} \left|\fint_{\bar{x}_{i}^n}^{\bar{x}_{i+1}^n} \bar\rho(y) {\,{\rm{d}}}y - \fint^{\bar{x}_{i+2}^n}_{\bar{x}_{i+1}^n} \bar\rho(y) {\,{\rm{d}}}y\right|
+
p\left(R_{N-1}^n\right)
\\\leq&~
2\,\left\|\bar{w}\right\|_{\mathbf{L^\infty}({\mathbb{R}})}
+{\mathrm{TV}}[\bar{w}]
+{\mathrm{Lip}}(p) \, {\mathrm{TV}}[\bar{\rho}].
\end{align*}
Moreover
\begin{align*}
&~\dfrac{{\rm{d}}~}{{\rm{d}} t} {\mathrm{TV}}\left[V^n(t)\right]
=
\dfrac{{\rm{d}}~}{{\rm{d}} t} \left[
\left|\bar{w}_0^n-v_0^n(y_0^n)\right|
+\sum_{i=0}^{N_n-2} \left|v_i^n(y_i^n)-v_{i+1}^n(y_{i+1}^n)\right|
+\left|v_{N_n-1}^n(y_{N_n-1}^n)-\bar{w}_{N_n-1}^n\right|
\right]
\\
=&~
\dfrac{{\rm{d}}~}{{\rm{d}} t} \left[
p(y_0^n)
+\sum_{i=0}^{N_n-2} \left|v_i^n(y_i^n)-v_{i+1}^n(y_{i+1}^n)\right|
+p(y_{N_n-1}^n)
\right]
\\
=&~
p'(y_0^n) \, \dot{y}_0^n
+\sum_{i=0}^{N_n-2} {\textrm{sgn}}\left[v_i^n(y_i^n)-v_{i+1}^n(y_{i+1}^n)\right] \left[p'(y_{i+1}^n) \, \dot{y}_{i+1}^n - p'(y_i^n) \, \dot{y}_i^n\right]
+p'(y_{N_n-1}^n) \, \dot{y}_{N_n-1}^n
\\
=&~
p'(y_0^n) \, \dot{y}_0^n
+\sum_{i=1}^{N_n-1} {\textrm{sgn}}\left[v_{i-1}^n(y_{i-1}^n)-v_i^n(y_i^n)\right] p'(y_i^n) \, \dot{y}_i^n
\\
&~-\sum_{i=0}^{N_n-2} {\textrm{sgn}}\left[v_i^n(y_i^n)-v_{i+1}^n(y_{i+1}^n)\right] p'(y_i^n) \, \dot{y}_i^n
+p'(y_{N_n-1}^n) \, \dot{y}_{N_n-1}^n
\\
=&~
\left[\vphantom{\vec{V}} 1-{\textrm{sgn}}\left[v_0^n(y_0^n)-v_1^n(y_1^n)\right]\right] p'(y_0^n) \, \dot{y}_0^n
+\left[\vphantom{\vec{V}} 1+{\textrm{sgn}}\left[v_{N_n-2}^n(y_{N_n-2}^n)-v_{N_n-1}^n(y_{N_n-1}^n)\right]\right] p'(y_{N_n-1}^n) \, \dot{y}_{N_n-1}^n
\\&~
+\sum_{i=1}^{N_n-2} \left[\vphantom{\vec{V}} {\textrm{sgn}}\left[v_{i-1}^n(y_{i-1}^n)-v_i^n(y_i^n)\right] - {\textrm{sgn}}\left[v_i^n(y_i^n)-v_{i+1}^n(y_{i+1}^n)\right]\right] p'(y_i^n) \, \dot{y}_i^n.
\end{align*}
We claim that the latter right hand side above is not positive.
Indeed \eqref{eq:dyi} implies that the following quantities are not positive
\begin{align*}
&\left[\vphantom{\vec{V}} 1-{\textrm{sgn}}\left[v_0^n(y_0^n)-v_1^n(y_1^n)\right]\right] p'(y_0^n) \, \dot{y}_0^n
=
-\left[\vphantom{\vec{V}} 1-{\textrm{sgn}}\left[v_0^n(y_0^n)-v_1^n(y_1^n)\right]\right] p'(y_0^n) \, \dfrac{(y_0^n)^2}{\kappa_n} \left[v_1^n(y_1^n)-v_0^n(y_0^n)\right],
\\[7pt]
&\left[\vphantom{\vec{V}} 1+{\textrm{sgn}}\left[v_{N_n-2}^n(y_{N_n-2}^n)-v_{N_n-1}^n(y_{N_n-1}^n)\right]\right] p'(y_{N_n-1}^n) \, \dot{y}_{N_n-1}^n
=\\
=&-\left[\vphantom{\vec{V}} 1+{\textrm{sgn}}\left[v_{N_n-2}^n(y_{N_n-2}^n)-v_{N_n-1}^n(y_{N_n-1}^n)\right]\right] p'(y_{N_n-1}^n) \, \dfrac{(y_{N_n-1}^n)^2}{\kappa} \, p(y_{N_n-1}^n),
\\[7pt]
&\left[\vphantom{\vec{V}} {\textrm{sgn}}\left[v_{i-1}^n(y_{i-1}^n)-v_i^n(y_i^n)\right] - {\textrm{sgn}}\left[v_i^n(y_i^n)-v_{i+1}^n(y_{i+1}^n)\right]\right] p'(y_i^n) \, \dot{y}_i^n
=\\
=&-\left[\vphantom{\vec{V}} {\textrm{sgn}}\left[v_{i-1}^n(y_{i-1}^n)-v_i^n(y_i^n)\right] - {\textrm{sgn}}\left[v_i^n(y_i^n)-v_{i+1}^n(y_{i+1}^n)\right]\right]
p'(y_i^n) \, \dfrac{(y_i^n)^2}{\kappa_n} \left[v_{i+1}^n(y_{i+1}^n)-v_i^n(y_i^n)\right].
\end{align*}
Therefore, ${\mathrm{TV}}\left[V^n(t)\right] \leq {\mathrm{TV}}\left[V^n(0)\right] \leq C_v \doteq 2\,\left\|\bar{w}\right\|_{\mathbf{L^\infty}({\mathbb{R}})} + {\mathrm{TV}}[\bar{w}] + {\mathrm{Lip}}(p) \, {\mathrm{TV}}[\bar{\rho}]$ for all $t\geq0$.
The estimate $\left\|V^n(t)\right\|_{\mathbf{L^\infty}({\mathbb{R}})} \le \left\|\bar{w}\right\|_{\mathbf{L^\infty}({\mathbb{R}})}$ is obvious.
Moreover, since the speed of propagation of the particles is bounded by $\left\|\bar{w}\right\|_{\mathbf{L^\infty}({\mathbb{R}})}$, by the above uniform bound for the total variation we have that
\[
\int_{{\mathbb{R}}} \left|V^n(t,x) - V^n(s,x)\right| {\rm{d}} x
\le
C_v \, \left\|\bar{w}\right\|_{\mathbf{L^\infty}({\mathbb{R}})} \, \left|t-s\right|.
\]
Hence, by applying Helly's theorem in the form \cite[Theorem~2.4]{BressanBook}, up to a subsequence, $(V^n)_{n\in{\mathbb{N}}}$ converges in $\mathbf{L_{loc}^1}\left({\mathbb{R}}_+ \times {\mathbb{R}}\right)$ to a function $v$ which is right continuous with respect to $x$ and satisfies \eqref{eq:vestimates}.
\end{proof}

We are now ready to prove our main result. Let us define some technical machinery. We introduce the piecewise constant density
\[\rho^n(t,x) \doteq p^{-1}(W^n(t,x)-V^n(t,x)) = \sum_{i=1}^{N_n-1} y_i^n(t) \, \chi_{[x_i^n(t),x_{i+1}^n(t)[}(x).\]
We set
\[\mathcal{M}_M \doteq \left\{\vphantom{I^{\int}} \mu \hbox{ Radon measure on ${\mathbb{R}}$ with compact support } \colon \mu\geq 0 ,~ \mu({\mathbb{R}})=M\right\}.\]
It is easily seen that $\rho^n(t)\in \mathcal{M}_M$ for all $t\geq 0$. Now, for a $\mu\in \mathcal{M}_M$ we consider its generalised inverse distribution function
\[X_\mu(z) \doteq \inf\left\{\vphantom{I^{\int}} x\in {\mathbb{R}} \colon \mu(]-\infty,x])> z\right\},\qquad z\in [0,M],\]
and define the \emph{rescaled $1$-Wasserstein distance} between $\mu_1,\mu_2\in \mathcal{M}_M$ as
\[d_{1}(\mu_1,\mu_2)\doteq\|X_{\mu_1}-X_{\mu_2}\|_{\mathbf{L^1}([0,M])}.\]

\begin{lemma}[Compactness of $\rho^n$ and $V^n$]\label{lem:check}
The sequences $(\rho^n)_{n\in{\mathbb{N}}}$ and $(V^n)_{n\in{\mathbb{N}}}$ are relatively compact in $\mathbf{L_{loc}^1}({\mathbb{R}}_+\times {\mathbb{R}})$.
\end{lemma}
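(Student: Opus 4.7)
The relative compactness of $(V^n)_{n\in{\mathbb{N}}}$ in $\mathbf{L_{loc}^1}({\mathbb{R}}_+\times{\mathbb{R}})$ is already contained in Proposition~\ref{prop:v}, so nothing new has to be done for that sequence; the content of the lemma lies in $(\rho^n)_{n\in{\mathbb{N}}}$. My plan is to avoid working with $\rho^n$ directly (it need not have a uniform BV bound in $x$, since inside each cell it is constant but jumps can accumulate wildly) and instead reduce the problem to the already established compactness of $W^n$ and $V^n$.

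The first step is to recall that by \eqref{eq:dw}--\eqref{eq:vni} together with Lemma~\ref{lem:1}, one has on each cell $y_i^n(t)\in\left[0,R_i^n\right]$, so that
\[
0\le W^n(t,x)-V^n(t,x)=p\left(y_i^n(t)\right)\le p\left(R_i^n\right)=\bar{w}_i^n\le \|\bar{w}\|_{\mathbf{L^\infty}({\mathbb{R}})}
\quad\text{on }[x_i^n(t),x_{i+1}^n(t)[,
\]
and consequently
\[
0\le \rho^n(t,x)=p^{-1}\bigl(W^n(t,x)-V^n(t,x)\bigr)\le p^{-1}\bigl(\|\bar{w}\|_{\mathbf{L^\infty}({\mathbb{R}})}\bigr),
\]
uniformly in $n$, $t$, $x$. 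This uniform $\mathbf{L^\infty}$ bound is the only ingredient needed beyond the two preceding propositions.

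The second step is to extract a good subsequence. By Propositions~\ref{prop:w} and \ref{prop:v}, up to a common subsequence (not relabelled) $W^n\to w$ and $V^n\to v$ in $\mathbf{L_{loc}^1}({\mathbb{R}}_+\times{\mathbb{R}})$; a further diagonal extraction makes both convergences also hold pointwise almost everywhere on ${\mathbb{R}}_+\times{\mathbb{R}}$. Since $p$ satisfies \eqref{eq:p}, in particular $p(0^+)=0$ and $p'>0$ on $]0,+\infty[$, the inverse $p^{-1}$ is continuous on $[0,\|\bar{w}\|_{\mathbf{L^\infty}({\mathbb{R}})}]$ (with $p^{-1}(0)=0$), and therefore
\[
\rho^n(t,x)=p^{-1}\bigl(W^n(t,x)-V^n(t,x)\bigr)\longrightarrow p^{-1}\bigl(w(t,x)-v(t,x)\bigr)\quad\text{for a.e.\ }(t,x).
\]

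The third and final step is to upgrade a.e.\ convergence to $\mathbf{L_{loc}^1}$ convergence. Given any compact $K\subset{\mathbb{R}}_+\times{\mathbb{R}}$, the uniform $\mathbf{L^\infty}$ bound from the first step provides an integrable dominating function $p^{-1}(\|\bar{w}\|_{\mathbf{L^\infty}({\mathbb{R}})})\,\chi_K$, so Lebesgue's dominated convergence theorem yields $\rho^n\to p^{-1}(w-v)$ in $\mathbf{L^1}(K)$, proving the claimed relative compactness of $(\rho^n)_{n\in{\mathbb{N}}}$.

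There is essentially no hard step here: the $\mathbf{L^\infty}$ bound on $\rho^n$ sidesteps the possible lack of a BV-in-$x$ bound, and the subtle point about the vacuum (which one might a priori worry about, since $p^{-1}$ is in general not Lipschitz near $0$) is resolved by the mere continuity of $p^{-1}$ at the origin, already ensured by assumption \eqref{eq:p}. The only place one must be a little careful is in the joint extraction of the a.e.\ convergent subsequence for both $W^n$ and $V^n$, which is routine.
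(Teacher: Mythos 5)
Your proof is correct but takes a genuinely different route from the paper's. The paper argues via a generalized Aubin--Lions strategy: it first derives a uniform-in-$(n,t)$ bound on $\mathrm{TV}[\rho^n(t)]$ from the BV bounds on $W^n$ and $V^n$ established in Propositions~\ref{prop:w} and \ref{prop:v} (using $p(\rho^n)=W^n-V^n$ together with the hypotheses \eqref{eq:p}), then establishes a Lipschitz-in-time estimate $d_1(\rho^n(t),\rho^n(s))\le C\,|t-s|$ in the rescaled $1$-Wasserstein distance, and invokes the compactness criterion of \cite[Theorem~5]{MarcoMaxARMA2015}; it then recovers compactness of $V^n$ back from that of $\rho^n$ via continuity of $p$ and dominated convergence. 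You instead reduce everything to the compactness already available for $W^n$ and $V^n$, push forward through the continuous map $p^{-1}$, and close with the uniform $\mathbf{L^\infty}$ bound on $\rho^n$ and dominated convergence. Your route is more elementary, dispensing entirely with the Wasserstein machinery, and notably sidesteps a delicate point in the paper's argument: deducing a uniform BV bound on $\rho^n$ from one on $p(\rho^n)=W^n-V^n$ tacitly requires $p^{-1}$ to be Lipschitz up to the vacuum, which \eqref{eq:p} alone does not guarantee (for instance $p(\rho)\propto\rho^\gamma$ with $\gamma>1$ has $p'(0^+)=0$), whereas you only need continuity of $p^{-1}$ at the origin, which is automatic from $p(0^+)=0$ and strict monotonicity. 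What the paper's approach buys extra is the quantitative time-equicontinuity estimate for $\rho^n$ in $d_1$, though that information is not required by the lemma as stated.
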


\proof
We have a uniform (w.r.t.\ $n$ and $t$) bound for the total variation of $V^n(t)$ as obtained in Proposition~\ref{prop:v}. The uniform bound for the total variation of $W^n(t)$ obtained in the proof of Proposition~\ref{prop:w} then implies that $p(\rho^n(t))$ has uniformly bounded total variation, and the assumptions on $p$ listed in \eqref{eq:p} then imply that ${\mathrm{TV}}[\rho^n(t)]$ is uniformly bounded. Now, one can prove that there exists a constant $C>0$ such that
\[d_1(\rho^n(t),\rho^n(s))\leq C \, \left|t-s\right|.\]
The above estimate can be proven in the same way as in the proof of \cite[Proposition~8]{MarcoMaxARMA2015}, with few unessential changes that are left to the reader. Consequently, \cite[Theorem 5]{MarcoMaxARMA2015} implies the assertion for $\rho^n$. The assertion for $V^n$ then follows by the continuity of $p$ and by dominated convergence.
\endproof

\begin{theorem}[Convergence to weak solutions]\label{thm:main}
Let $(\bar{v},\bar{w})\in {\mathbf{BV}}({\mathbb{R}}\,;\,\mathcal{W})$ be such that $\bar\rho \doteq p^{-1}(\bar{w}-\bar{v})$ is compactly supported and belongs to $\mathbf{L^1}({\mathbb{R}};{\mathbb{R}}_+)$. Fix $n\in {\mathbb{N}}$ sufficiently large and let $N_n \doteq 2^n$, $\kappa_n \doteq 2^{-n}M$, with $M \doteq \left\|\bar{\rho}\right\|_{\mathbf{L^1}({\mathbb{R}})}$. Let $\bar{x}_0^n<\ldots<\bar{x}_{N_n}^n$ be the atomization constructed in \eqref{eq:initial_ftl}. Let $x_0^n(t),\ldots,x_{N_n}^n(t)$ be the solution to the multi population follow-the-leader system \eqref{eq:ftl} with initial datum $\bar{x}_0^n,\ldots,\bar{x}_{N_n}^n$. Let $\bar{w}_0^n, \ldots, \bar{w}_{N_n-1}^n$ be given by \eqref{eq:dw}. Set $W^n$ and $V^n$ as in \eqref{eq:hwn} and \eqref{eq:hvn} respectively, where $v^n_i$ and $y^n_i$ are defined by \eqref{eq:vni} and \eqref{eq:Deltan} respectively. Then, up to a subsequence, $(V^n,W^n)_{n\in{\mathbb{N}}}$ converges in $\mathbf{L_{loc}^1}({\mathbb{R}}_+\times {\mathbb{R}};\mathcal{W})$ as $n\rightarrow +\infty$ to a weak solution of the Aw, Rascle and Zhang system \eqref{eq:CauchyARZ} with initial datum $(\bar{v},\bar{w})$.
\end{theorem}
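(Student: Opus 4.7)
Given that compactness is already established by Propositions~\ref{prop:w} and~\ref{prop:v} and Lemma~\ref{lem:check}, my task is to verify the weak formulation \eqref{eq:weak_sol} for the limit $(v,w)$ and to recover the initial datum. I would carry this out in three moves: extract the limits, write a discrete weak identity that comes directly from the ODE system \eqref{eq:ftl}--\eqref{eq:follow_the_leader_intro2}, and pass to the limit using a Riemann-sum estimate whose smallness is supplied by $\kappa_n=2^{-n}M$.

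\textbf{Step 1: extract the limits.} Along a common subsequence, $W^n\to w$ and $V^n\to v$ strongly in $\mathbf{L_{loc}^1}(\mathbb{R}_+\times\mathbb{R})$ with uniform $\mathbf{L^\infty}$ bounds. Continuity of $p^{-1}$ together with dominated convergence then gives $\rho^n=p^{-1}(W^n-V^n)\to\rho=p^{-1}(w-v)$ in $\mathbf{L_{loc}^1}$. The inequality $V^n\le W^n$ (immediate from \eqref{eq:vni} and $y_i^n\ge0$) passes to the limit, so $(v,w)\in\mathcal W$ a.e.

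\textbf{Step 2: discrete weak identity.} Fix $\phi\in \mathbf{C_c^\infty}(]0,+\infty[\times\mathbb{R})$ and set
\[
\Phi^n(t):=\sum_{i=0}^{N_n-1}\kappa_n\,\phi(t,x_i^n(t)),\qquad \Psi^n(t):=\sum_{i=0}^{N_n-1}\kappa_n\,\bar w_i^n\,\phi(t,x_i^n(t)).
\]
Both vanish at $t=0$ and for $t$ large, so $\int_0^\infty\dot\Phi^n\,dt=0=\int_0^\infty\dot\Psi^n\,dt$. The decisive feature of the multi-population FTL system is $\dot{\bar w}_i^n=0$, while $\dot x_i^n=v_i^n(y_i^n)=V^n(t,x_i^n(t))$; this yields
\[
0=\int_0^\infty\sum_{i=0}^{N_n-1}\kappa_n\,[\phi_t+V^n\phi_x](t,x_i^n(t))\,dt,
\]
and the identical identity with the extra bounded weight $\bar w_i^n=W^n(t,x_i^n(t))$ in each summand.

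\textbf{Step 3: Riemann-sum error and passage to the limit.} Exploiting $\kappa_n=y_i^n(x_{i+1}^n-x_i^n)$, each summand above is the one-point quadrature of $\int_{x_i^n}^{x_{i+1}^n}\rho^n[\phi_t+V^n\phi_x]\,dx$. Since $V^n$ is piecewise constant in $x$, the integrand is smooth inside each cell, and the per-cell quadrature error is bounded by $C_\phi\,y_i^n(x_{i+1}^n-x_i^n)^2=C_\phi\,\kappa_n(x_{i+1}^n-x_i^n)$. Summing only the indices whose cell meets ${\mathrm{supp}}_x\phi$, Lemma~\ref{lem:1} bounds the total corresponding cell length uniformly in $n$ (by ${\mathrm{diam}}({\mathrm{supp}}_x\phi)+(\bar x_{\max}-\bar x_{\min})+\|\bar w\|_{\mathbf{L^\infty}}T$), so the total error is $O(\kappa_n)\to 0$; the weight $\bar w_i^n\le\|\bar w\|_{\mathbf{L^\infty}}$ in the second identity changes nothing. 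Hence
\[
\iint \rho^n(\phi_t+V^n\phi_x)\,dx\,dt\to 0,\qquad \iint \rho^n W^n(\phi_t+V^n\phi_x)\,dx\,dt\to 0,
\]
and the strong $\mathbf{L_{loc}^1}$ convergences of Step 1, together with the uniform $\mathbf{L^\infty}$ bounds, let me pass to the limit in the integrands, delivering exactly \eqref{eq:weak_sol}. The initial condition $(v(0),w(0))=(\bar v,\bar w)$ is then recovered from the uniform time-Lipschitz estimates \eqref{eq:westimates}--\eqref{eq:vestimates} combined with $(V^n(0),W^n(0))\to(\bar v,\bar w)$ a.e.\ on $\{\bar\rho>0\}$ (Lebesgue differentiation on the shrinking atomization cells), the vacuum set being invisible to \eqref{eq:weak_sol}.

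\textbf{Main obstacle.} The only genuinely delicate point is the Riemann-sum estimate in Step 3: individual cells $(x_i^n,x_{i+1}^n)$ need not shrink (conspicuously so near vacuum, where $y_i^n$ is small), so one cannot rely on a vanishing mesh. The convergence comes entirely from the product $\kappa_n\cdot\sum_i(x_{i+1}^n-x_i^n)$ being small, where the sum telescopes to $x_{N_n}^n-x_0^n$ and is bounded via Lemma~\ref{lem:1}. The multi-population structure itself enters only benignly, since $\dot{\bar w}_i^n=0$ makes the second equation as easy as the first.
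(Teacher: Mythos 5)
Your proof is correct, and it reaches the target identity by a genuinely different route from the paper's. Both proofs reduce to showing that $\iint\rho^n\,(\phi_t+V^n\phi_x)\binom{1}{W^n}\,dx\,dt\to 0$, and both exploit that this quantity is $O(\kappa_n)$; but the bookkeeping, and crucially the source of the smallness, differ. The paper starts from the Eulerian double integral, integrates by parts in time via $\frac{d}{dt}\int_{x_i^n}^{x_{i+1}^n}\phi\,dx$ and the ODE for $y_i^n$ from \eqref{eq:dyi}, and ends up with a remainder whose per-cell error $\bigl|\int_{x_i^n}^{x_{i+1}^n}[\phi(t,x)-\phi(t,x_{i+1}^n)]\,dx\bigr|\le\tfrac{\mathrm{Lip}[\phi]}{2}\kappa_n^2/(y_i^n)^2$ is multiplied by $\tfrac{(y_i^n)^2}{\kappa_n}|v_{i+1}^n-v_i^n|$; the sum is then controlled by the uniform ${\mathbf{BV}}$ bound ${\mathrm{TV}}[V^n(t)]\le C_v$ of Proposition~\ref{prop:v}, giving the rate $\kappa_n C_v$. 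You instead write the exact Lagrangian identity $\int_0^\infty\dot\Phi^n\,dt=0$ for the empirical sum $\Phi^n(t)=\sum_i\kappa_n\phi(t,x_i^n(t))$ (and its $\bar w_i^n$-weighted twin), interpret the summand as a one-point quadrature for $\int_{x_i^n}^{x_{i+1}^n}\rho^n[\phi_t+V^n\phi_x]\,dx$, and bound the per-cell error by $C_\phi\kappa_n(x_{i+1}^n-x_i^n)$, so that the total error telescopes to $C_\phi\kappa_n(x_{N_n}^n-x_0^n)$ — no ${\mathbf{BV}}$ bound is used in the error estimate itself, only the finite-speed bound on $x_{N_n}^n-x_0^n$ supplied by Lemma~\ref{lem:1}. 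This is a cleaner mechanism and makes the role of the two ingredients sharper: the ${\mathbf{BV}}$ estimate on $V^n$ is used only for the strong compactness (Lemma~\ref{lem:check}), while the error vanishes for the essentially elementary telescoping reason you identify in your ``Main obstacle'' paragraph. You also briefly address recovery of the initial datum via the time-Lipschitz estimates and Lebesgue differentiation on the atomization cells (with the caveat about vacuum), which the paper's proof does not spell out; your handling of that point is at the same level of rigour as the rest of the paper and is an improvement rather than a gap.
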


\proof Due to the result in Lemma~\ref{lem:check}, the sequence
\[\rho^n\doteq p^{-1}(W^n-V^n)\]
converges (up to a subsequence) a.e.\ and in $\mathbf{L_{loc}^1}({\mathbb{R}}_+\times {\mathbb{R}})$ to $\rho \doteq p^{-1}(w-v) \in \mathbf{L^1}\left({\mathbb{R}}_+ \times {\mathbb{R}}\right)$.

Now, let $\phi\in \mathbf{C_c^\infty}(]0,+\infty[\times {\mathbb{R}})$. We compute
\begin{align}
  & \int_{{\mathbb{R}}_+}\int_{\mathbb{R}} \rho^n(t,x)\left[\phi_t(t,x) + V^n(t,x) \phi_x(t,x)\right] \begin{pmatrix} 1 \\ W^n(t,x)\end{pmatrix} {\rm{d}} x \,{\rm{d}} t \label{eq:weak:discrete}\\
  =& \sum_{i=0}^{N_n-1}\int_{{\mathbb{R}}_+}y^n_i(t)\int_{x^n_i(t)}^{x^n_{i+1}(t)}
  \left[\phi_t(t,x) +v^n_i(y^n_i(t)) \phi_x(t,x)\right] \begin{pmatrix} 1 \\ \bar{w}^n_i\end{pmatrix} \,{\rm{d}} x \,{\rm{d}} t \nonumber\\
  =& \sum_{i=0}^{N_n-1}\int_{{\mathbb{R}}_+}y^n_i(t)\left[\frac{{\rm{d}}~}{{\rm{d}} t}\left(\int_{x^n_i(t)}^{x^n_{i+1}(t)}\phi(t,x) \,{\rm{d}} x\right)-v^n_{i+1}(y^n_{i+1}(t)) \, \phi(t,x^n_{i+1}(t))\right.\nonumber\\
  & \left.\vphantom{\int_{x^n_i(t)}^{x^n_{i+1}(t)}}+v^n_{i}(y^n_i(t)) \, \phi(t,x^n_{i}(t)) + v^n_i(y^n_i(t))\left[\phi(t,x^n_{i+1}(t))-\phi(t,x^n_{i}(t))\right]\right]\begin{pmatrix} 1 \\ \bar{w}^n_i\end{pmatrix} \,{\rm{d}} x \,{\rm{d}} t \nonumber\\
  =& \sum_{i=0}^{N_n-1}\int_{{\mathbb{R}}_+} \dfrac{y^n_i(t)^2}{\kappa_n} \left[v_{i+1}^n(y^n_{i+1}(t))-v_i^n(y^n_i(t))\right]
  \left[\int_{x^n_i(t)}^{x^n_{i+1}(t)}\phi(t,x) \,{\rm{d}} x\right]\begin{pmatrix} 1 \nonumber\\ \bar{w}^n_i\end{pmatrix} \,{\rm{d}} x \,{\rm{d}} t\\
  & -\sum_{i=0}^{N_n-1}\int_{{\mathbb{R}}_+}y^n_i(t)
  \left[v^n_{i+1}(y^n_{i+1}(t)) -  v^n_i(y^n_i(t))\right] \phi(t,x^n_{i+1}(t)) \begin{pmatrix} 1 \\ \bar{w}^n_i\end{pmatrix} \,{\rm{d}} x \,{\rm{d}} t \nonumber\\
  =& \sum_{i=0}^{N_n-1}\int_{{\mathbb{R}}_+}\frac{y^n_i(t)^2}{\kappa_n}
  \left[v_{i+1}^n(y^n_{i+1}(t))-v_{i}^n(y^n_{i}(t))\right]
  \left[\int_{x^n_i(t)}^{x^n_{i+1}(t)}
  \left[\phi(t,x)-\phi(t,x^n_{i+1}(t))\right] {\rm{d}} x\right]
  \begin{pmatrix} 1 \\ \bar{w}^n_i\end{pmatrix} \,{\rm{d}} t.\nonumber
\end{align}
Therefore, by observing that
\[
\left|\int_{x^n_i(t)}^{x^n_{i+1}(t)}
\left[\phi(t,x)-\phi(t,x^n_{i+1}(t))\right] {\rm{d}} x\right|
\le
\frac{{\mathrm{Lip}}[\phi]}{2} \left[x^n_i(t) - x^n_{i+1}(t)\right]^2
= \frac{{\mathrm{Lip}}[\phi]}{2} \frac{\kappa_n^2}{y^n_i(t)^2},
\]
and by recalling the uniform bound ${\mathrm{TV}}\left[V^n(t)\right] \leq C_v$ obtained in the proof of Proposition \ref{prop:v}, we easily get the estimate
\[
\left\| \int_{{\mathbb{R}}_+}\int_{\mathbb{R}} \rho^n(t,x)\left[\phi_t(t,x) + V^n(t,x) \phi_x(t,x)\right] \begin{pmatrix} 1 \\ W^n(t,x)\end{pmatrix} {\rm{d}} x \,{\rm{d}} t\right\|
  \leq  \kappa_n \, \frac{{\mathrm{Lip}}[\phi]}{2} \, C_v \, T \left[1+\left\|\bar{w}\right\|_{\mathbf{L^\infty}({\mathbb{R}})}\right],
  \]
where $T>0$ is such that ${\mathrm{supp}}(\phi)\subset [0,T]\times {\mathbb{R}}$.
Clearly, the right hand side above tends to zero as $n\rightarrow +\infty$. Finally, up to a subsequence, the double integral in \eqref{eq:weak:discrete} converges to
\[\int_{{\mathbb{R}}_+}\int_{\mathbb{R}}\rho(t,x) \left[\phi_t(t,x) + v(t,x) \phi_x(t,x)\right] \begin{pmatrix} 1 \\ w(t,x)\end{pmatrix} {\rm{d}} x \,{\rm{d}} t\]
and this concludes the proof.
\endproof

\section{Numerical experiments}\label{sec:numerics}
In order to test the proposed atomization algorithm, we consider four Riemann problems leading to four solutions of interest.
The first three coincide with those done in \cite[Section~4]{chalonsgoatin2007} and are used to check the ability of the scheme to deal with contact discontinuities.
The last one is the example given in \cite[Section~5]{AKMR2002} and is used to check the ability of the scheme to deal with the vacuum.
In each case, the method is first evaluated by means of a qualitative comparison of the approximate solution $\rho_\Delta$, $v_\Delta$, $w_\Delta$ with the exact solution $\rho$, $v$, $w$. Two initial mesh sizes are automatically determined by the total number of particles $N$ and the two density values of the Riemann data. The qualitative results corresponding to $N=500$ and final time $t=0.2$ for the Test 1, Test 2 and Test 3 and $t=1$ for Test 4 are presented on \figurename~\ref{fig:Test 1,2} and \figurename~\ref{fig:Test 3,4}.

\begin{figure}
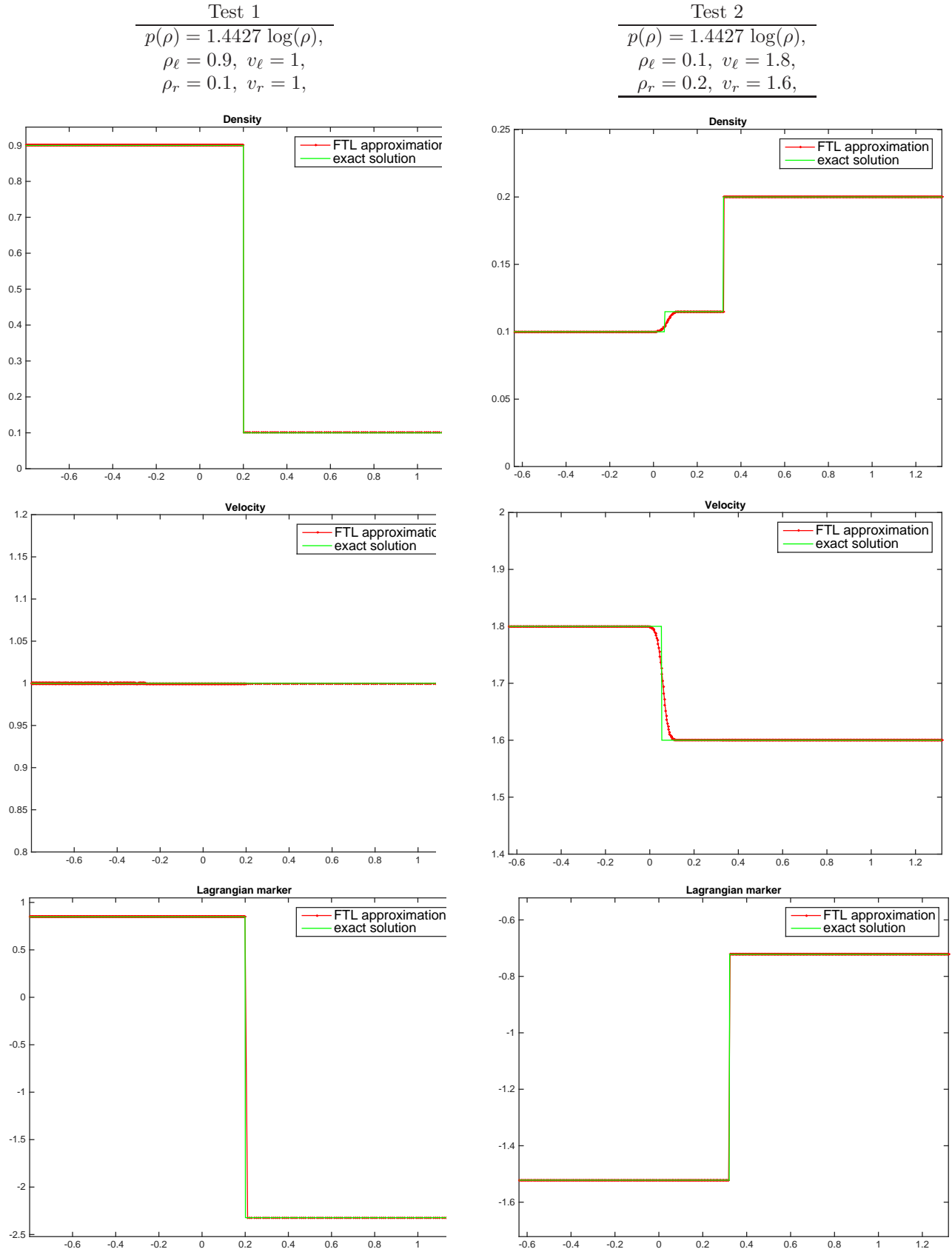


\begin{minipage}[c]{.48\textwidth}
\begin{center}
\begin{tabular}{c}
    Test 1\\
  \hline
    $p(\rho)=1.4427\,\log(\rho)$,\\
    $\rho_\ell=0.9,~v_\ell=1$,\\
    $\rho_r=0.1,~v_r=1$,\\
  \hline
  \end{tabular}
\end{center}
\end{minipage}
\hfill
\begin{minipage}[c]{.48\textwidth}
\begin{center}
\begin{tabular}{c}
    Test 2\\
  \hline
    $p(\rho)=1.4427\,\log(\rho)$,\\
    $\rho_\ell=0.1,~v_\ell=1.8$,\\
    $\rho_r=0.2,~v_r=1.6$,\\
  \hline
  \end{tabular}
\end{center}
\end{minipage}
\bigskip\\
\begin{minipage}[c]{.48\textwidth}
\begin{center}
\includegraphics[width=\textwidth]{density_1}
\end{center}
\end{minipage}
\hfill
\begin{minipage}[c]{.48\textwidth}
\begin{center}
\includegraphics[width=\textwidth]{density_2}
\end{center}
\end{minipage}
\bigskip\\
\begin{minipage}[c]{.48\textwidth}
\begin{center}
\includegraphics[width=\textwidth]{velocity_1}
\end{center}
\end{minipage}
\hfill
\begin{minipage}[c]{.48\textwidth}
\begin{center}
\includegraphics[width=\textwidth]{velocity_2}
\end{center}
\end{minipage}
\bigskip\\
\begin{minipage}[c]{.48\textwidth}
\begin{center}
\includegraphics[width=\textwidth]{lagrangian_1}
\end{center}
\end{minipage}
\hfill
\begin{minipage}[c]{.48\textwidth}
\begin{center}
\includegraphics[width=\textwidth]{lagrangian_2}
\end{center}
\end{minipage}
\caption{Left column for Test 1 and right column for Test 2. Initial conditions are specified in the tables on the top using $N=200$ particles. \label{fig:Test 1,2}}
\end{figure}

\begin{figure}

\begin{minipage}[c]{.48\textwidth}
\begin{center}
\begin{tabular}{c}
    Test 3\\
  \hline
    $p(\rho)=1.4427\,\log(\rho)$,\\
    $\rho_\ell=0.5,~v_\ell=1.2$,\\
    $\rho_r=0.1,~v_r=1.6$,\\
  \hline
  \end{tabular}
\end{center}
\end{minipage}
\hfill
\begin{minipage}[c]{.48\textwidth}
\begin{center}
\begin{tabular}{c}
    Test 4\\
  \hline
    $p(\rho)=6\rho$,\\
    $\rho_\ell=0.05,~v_\ell=0.05$,\\
    $\rho_r=0.05,~v_r=0.5$,\\
  \hline
  \end{tabular}
\end{center}
\end{minipage}
\bigskip\\
\begin{minipage}[c]{.48\textwidth}
\begin{center}
\includegraphics[width=\textwidth]{density_3}
\end{center}
\end{minipage}
\hfill
\begin{minipage}[c]{.48\textwidth}
\begin{center}
\includegraphics[width=\textwidth]{density_4}
\end{center}
\end{minipage}
\bigskip\\
\begin{minipage}[c]{.48\textwidth}
\begin{center}
\includegraphics[width=\textwidth]{velocity_3}
\end{center}
\end{minipage}
\hfill
\begin{minipage}[c]{.48\textwidth}
\begin{center}
\includegraphics[width=\textwidth]{velocity_4}
\end{center}
\end{minipage}
\bigskip\\
\begin{minipage}[c]{.48\textwidth}
\begin{center}
\includegraphics[width=\textwidth]{lagrangian_3}
\end{center}
\end{minipage}
\hfill
\begin{minipage}[c]{.48\textwidth}
\begin{center}
\includegraphics[width=\textwidth]{lagrangian_4}
\end{center}
\end{minipage}
\caption{Left column for Test 3 and right column for Test 4. Initial conditions are specified in the tables on the top using $N=200$ particles. \label{fig:Test 3,4}}
\end{figure}

Then, for several values of $N$, a quantitative evaluation through the $\mathbf{L^1}$-norm (of the difference between the exact and approximate densities solutions) is made, they are given on Table~\ref{errors}.
\begin{table}
\begin{center}
\begin{tabular}{c|c|c|c|c}
\hline
 $N$ & Test 1 & Test 2 & Test 3 & Test 4\\
 \hline
 \hline
 100 & $8.9e-03$ & $4.1e-03$ & $4.7e-03$ & $2.1e-03$\\
 \hline
 500 & $1.8e-03$ & $1.1e-03$ & $1.8e-03$ & $4.7e-04$\\
 \hline
 1000 & $4.7e-04$ & $5.7e-04$ & $1.2e-03$ & $2.5e-04$\\
 \hline
 2000 & $4.5e-04$ & $3.4e-04$ & $8.2e-04$ & $1.3e-04$\\
 \hline
\end{tabular}
 \caption{Different numbers of particles and corresponding discrete $\mathbf{L^1}$-errors for densities.\label{errors}}
\end{center}
\end{table}


\medskip
Received xxxx 20xx; revised xxxx 20xx.
\medskip

\end{document}